  \pgfplotsset{compat=newest}
\numberwithin{equation}{section}
\numberwithin{figure}{section}
\theoremstyle{plain}
\newtheorem{thm}{\protect\theoremname}
\providecommand{\theoremname}{Theorem}
\begin{document}

\title{Recovery of Compactly Supported Functions from Spectrogram Measurements via Lifting}


\author{\IEEEauthorblockN{Sami Merhi}
\IEEEauthorblockA{Department of Mathematics\\
  Michigan State University\\
  East Lansing, MI 48824, U.S.A.\\
  Email: merhisam@math.msu.edu}
\and
\IEEEauthorblockN{Aditya Viswanathan}
\IEEEauthorblockA{Department of Mathematics\\
  Michigan State University\\
  East Lansing, MI 48824, U.S.A.\\
  Email: aditya@math.msu.edu}
\and
\IEEEauthorblockN{Mark Iwen}
\IEEEauthorblockA{Dept. of Mathematics and Dept. of Computational \\ 
  Mathematics, Science and Engineering (CMSE)\\
  Michigan State University\\
  East Lansing, MI 48824, U.S.A.\\
Email: markiwen@math.msu.edu}}

\maketitle

\begin{abstract}
A novel phase retrieval method, motivated by ptychographic imaging,
is proposed for the approximate recovery of a compactly supported
specimen function $f:\mathbb{R}\rightarrow\mathbb{C}$ from its continuous
short time Fourier transform (STFT) spectrogram measurements. The
method, partially inspired by the well known \textit{PhaseLift} \cite{candes2013phaselift}
algorithm, is based on a lifted formulation of the infinite dimensional
problem which is then later truncated for the sake of computation.
Numerical experiments demonstrate the promise of the proposed approach. 
\end{abstract}

\section{Introduction}

The problem of signal recovery (up to a global phase) from phaseless
STFT measurements appears in many audio engineering and imaging applications.
Our principal motivation here, however, is ptychographic imaging (see,
e.g., \cite{rodenburg2007transmission,marchesini2016alternating})
in the 1-D setting where a compactly supported specimen, $f:\mathbb{R}\rightarrow\mathbb{C}$,
is scanned by a focused illuminating beam $g:\mathbb{R}\rightarrow\mathbb{C}$
which translates across the specimen in fixed overlapping shifts $l_{1},\dots,l_{K}\in\mathbb{R}$.
At each such shift of the beam (or, equivalently, the specimen) a
phaseless diffraction image is then sampled in bulk by a detector.
Due to the underlying physics the collected measurements are then
modeled as sampled STFT magnitude measurements of $f$ of the form
\begin{equation}
b_{k,j}:=\left|\int_{-\infty}^{\infty}\!\!\!f\left(t\right)g\left(t-l_{k}\right)e^{-2\pi i\omega_{j}t}dt\right|^{2}\label{equ:ContMeasurements}
\end{equation}
for a finite set of $KN$ shift and frequency pairs $(l_{k},\omega_{j})\in\{l_{1},\dots,l_{K}\}\times\{\omega_{1},\dots,\omega_{N}\}$.
Our objective is to approximate $f$ (up to a global phase) using
these $b_{k,j}$ measurements.

There has been a good deal of work on signal recovery from phaseless
STFT measurements in the last couple of years in the \textit{discrete
setting}, where $f$ and $g$ are modeled as vectors ab initio, and
then recovered from discrete STFT magnitude measurements. In this
setting many related recovery techniques have been considered including
iterative methods along the lines of Griffin and Lim \cite{nawab1983signal,sturmel2011signal}
and alternating projections \cite{marchesini2016alternating}, graph
theoretic methods for Gabor frames based on polarization \cite{salanevich2015polarization,pfander2016robust},
and semidefinite relaxation-based methods \cite{jaganathan2016stft},
among others \cite{eldar2015sparse,bendory2016non,iwen2016fast,robustPR}.

Herein we will instead consider the approximate recovery of $f$ (as
a compactly supported function) from samples of its continuous STFT
magnitude measurements $b_{k,j}$ as per \eqref{equ:ContMeasurements}.
Besides perhaps better matching the continuous models considered in
some applications such as ptychography, and allowing one to more naturally
consider approaches that utilize, e.g., irregular sampling, we also
take recent work on phase retrieval in infinite dimensional Hilbert
spaces \cite{thakur2011reconstruction,cahill2016phase,alaifari2016stable}
as motivation for exploring numerical methods to solve this problem.

In particular, the recent work of Daubechies and her collaborators
implies that the stability of such continuous phase retrieval problems
is generally less well behaved than their discrete counterparts \cite{cahill2016phase,alaifari2016stable}.
Specifically, \cite{alaifari2016stable} characterizes a class of functions for which infinite dimensional phase retrieval (up to a single global phase) from Gabor measurements is unstable, and then proposes the reconstruction of these worst-case functions 
up to several local phase multiples as a stable alternative.  We take this initial work on stable infinite dimensional phase retrieval from Gabor measurements as a further motivation to explore new fast numerical techniques for the robust recovery of compactly supported functions from their continuous spectrogram measurements.



\subsection{The Problem Statement and Specifications}

Given a vector of stacked spectrogram samples from \eqref{equ:ContMeasurements},
\begin{equation}
\vec{b}=\left(\begin{array}{c}
b_{1,1},\dots,b_{1,N},b_{2,1},\dots,b_{K,N}
\end{array}\right)^T \in[0,\infty)^{NK},\label{eq:meas_vec}
\end{equation}
%
our goal is to approximately recover a piecewise smooth and compactly supported function $f:\mathbb{R}\rightarrow\mathbb{C}$. Of course $f$ can only be recovered up to certain ambiguities (such as up to a global phase, etc.) which depend not only on $f$, but also the window function $g$ (see, e.g., \cite{alaifari2016stable}). Without loss of generality, we will assume that the support of $f$ is contained in $[-1,1]$. Given our motivation from ptychographic imaging we will, herein at least, primarily consider the unshifted beam function $g$ to also be (approximately) compactly supported within a smaller subset $[-a,a]\subset[-1,1]$. Furthermore, we will also
assume that $g$ is smooth enough that its Fourier transform decays relatively rapidly in frequency space compared to $\hat{f}$. Examples of such $g$ include both suitably scaled Gaussians, as well as compactly
supported $C^{\infty}$ bump functions \cite{johnson2015saddle}.

\subsection{The Proposed Numerical Approach}

The proposed method aims to recover samples from the Fourier transform
of $f$ at frequencies in $\Omega=\{\omega_{1},\dots,\omega_{N}\}$,
giving $\vec{f}\in\mathbb{C}^{N}$ with $f_{j}=\widehat{f}(\omega_{j})$,
from which $\widehat{f}$ can then be approximately recovered
via standard sampling theorems (see, e.g., \cite{strohmer2005implementations}).
The inverse Fourier transform of this approximation of $\widehat{f}$
then provides our approximation of $f$.

Recovery of the samples from $\widehat{f}$, $\vec{f}\in\mathbb{C}^{N}$,
is performed in two steps using techniques from \cite{iwen2016fast,robustPR}
adapted to this continuous setting: first, a truncated lifted linear
system is inverted in order to learn a portion of the rank-one matrix
$\vec{f}\vec{f}^{*}$ from a finite set of STFT spectrogram samples,
then, an eigenvector based angular synchronization method is used
in order to recover $\vec{f}$ from the portion of $\vec{f}\vec{f}^{*}$
computed in the first step. Note that this truncated lifted linear system is both banded and Toeplitz, with band size determined by the decay of $\widehat{g}$. If $g$ is effectively bandlimited to $[-\delta,\delta ]$ the proposed lifting-based algorithm can be implemented to run in $\mathcal{O}\left(\delta N (\log N + \delta^2)\right)$-time, which is essentially FFT-time in $N$ for small $\delta$. 

\section{Our Lifted Formulation}

The following theorem forms the basis of our lifted setup.
\begin{thm}
Suppose $f:\mathbb{R}\rightarrow\mathbb{C}$ is piecewise smooth and
compactly supported in $\left[-1,1\right]$. Let $g\in L^{2}\left(\left[-a,a\right]\right)$
be supported in $\left[-a,a\right]\subset[-1,1]$ for some $a<1$,
with $\left\Vert g\right\Vert _{L^{2}}=1$. Then for all $\omega\in\mathbb{R}$,
\[
\left|\mathcal{F}\left[f\cdot S_{l}g\right]\left(\omega\right)\right|=\frac{1}{2}\left|\sum_{m\in\mathbb{Z}}\!\!e^{-\pi ilm}\hat{f}\left(\frac{m}{2}\right)\hat{g}\left(\frac{m}{2}-\omega\right)\right|
\]
for all shifts $l\in[a-1,1-a]$. \label{equ:Lifted} 
\end{thm}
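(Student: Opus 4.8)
The plan is to turn the continuous Fourier integral into a series by exploiting compact support, and then to recognize each term as a modulated sample of $\hat g$. First I would record the decisive consequence of the constraint $l\in[a-1,1-a]$: writing $(S_l g)(t)=g(t-l)$, the shifted window is supported in $[l-a,l+a]$, and the hypothesis on $l$ forces $[l-a,l+a]\subseteq[-1,1]$. Hence the product $f\cdot S_l g$ is supported in $[-1,1]$, so that $\mathcal{F}[f\cdot S_l g](\omega)=\int_{-1}^{1}f(t)\,g(t-l)\,e^{-2\pi i\omega t}\,dt$. This containment is exactly what allows a clean expansion on a fixed interval and is, conceptually, the heart of the statement.

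Next I would expand $f$ on the length-$2$ interval $[-1,1]$ in its Fourier series. The key observation is that the Fourier-series coefficients on an interval of length $2$ are precisely the half-integer samples of the Fourier transform: $f$ agrees on $[-1,1]$ with $\tfrac{1}{2}\sum_{m\in\mathbb{Z}}\hat f(m/2)\,e^{\pi i m t}$. Substituting this expansion into the integral above and interchanging summation and integration gives $\mathcal{F}[f\cdot S_l g](\omega)=\tfrac{1}{2}\sum_{m\in\mathbb{Z}}\hat f(m/2)\int_{-1}^{1}g(t-l)\,e^{\pi i m t}e^{-2\pi i\omega t}\,dt$.

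I would then evaluate the inner integral. Because $S_l g$ is supported inside $[-1,1]$, its limits extend to all of $\mathbb{R}$, and the substitution $u=t-l$ separates a global phase from a Fourier transform of $g$: the result is a factor $e^{-2\pi i\omega l}$ that is independent of $m$, multiplied by a per-term phase $e^{\mp\pi i l m}$ and a shifted sample $\hat g(\tfrac{m}{2}-\omega)$, the precise signs following from the paper's Fourier conventions. Since $e^{-2\pi i\omega l}$ is unimodular and can be pulled outside the sum, taking absolute values eliminates it and leaves exactly $\tfrac{1}{2}\bigl|\sum_{m\in\mathbb{Z}}e^{-\pi i l m}\hat f(m/2)\,\hat g(\tfrac{m}{2}-\omega)\bigr|$, as claimed.

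I expect the one genuine obstacle to be rigor in the interchange of sum and integral. This is controlled by the hypotheses: the partial sums of the Fourier series converge to $f$ in $L^{2}[-1,1]$, and since $g\,\mathbf{1}_{[-a,a]}\in L^{2}$, term-by-term integration against $g(\cdot-l)e^{-2\pi i\omega\,\cdot}$ is justified by continuity of the $L^{2}$ inner product (Cauchy--Schwarz), with piecewise smoothness ensuring $f$ is well enough behaved for this (and for the later sampling-theorem reconstruction). The remaining work --- tracking the phase factors and confirming the prefactor $\tfrac{1}{2}$ --- is routine bookkeeping once the support containment and the identification of $\tfrac{1}{2}\hat f(m/2)$ as the Fourier-series coefficients are in place.
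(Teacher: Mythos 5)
Your proposal is correct in strategy and takes a genuinely different, more economical route than the paper. The paper works entirely in frequency space: it writes the spectrogram as $|\langle f,h\rangle|^2$ with $h(t)=\overline{g(t-l)}e^{2\pi i\omega t}$, passes to $|\langle \hat f,\hat h\rangle|^2$ by Plancherel, expands $\hat f$ via the Shannon sampling theorem (samples at half-integers, since $f$ is supported in $[-1,1]$), and then identifies each resulting integral as a convolution of $\hat g(\cdot)e^{-2\pi i l(\cdot)}$ with a shifted sinc, whose Fourier transforms are computed separately before the support condition on $l$ is finally invoked to extend an integral over $[-l-1,-l+1]$ to all of $\mathbb{R}$. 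Your physical-space argument is the exact dual of this: the identity $f(t)=\tfrac12\sum_m\hat f(m/2)e^{\pi i m t}$ on $[-1,1]$ \emph{is} the sampling theorem for $\hat f$, and substituting it directly into the STFT integral lets you skip the Plancherel step and all of the convolution/sinc bookkeeping, reaching the same inner integral $\int g(t-l)e^{-2\pi i t(\omega-m/2)}\,dt$ in one move; the support condition on $l$ enters in the same place (extending that integral to $\mathbb{R}$), and your $L^2$-continuity justification of the term-by-term integration is sound. One caveat: the sign bookkeeping you defer as ``routine'' is actually the only delicate point. Carrying your substitution through carefully yields $\tfrac12\,e^{-2\pi i l\omega}\sum_m e^{+\pi i l m}\hat f(m/2)\,\hat g(\omega-\tfrac m2)$, i.e.\ the sample $\hat g(\omega-\tfrac m2)$ with phase $e^{+\pi i l m}$, whereas the theorem states $e^{-\pi i l m}\hat g(\tfrac m2-\omega)$; these agree (inside the absolute value) when $g$ is real and even, as in the paper's experiments, but not for a general window. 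The paper's own derivation contains compensating sign slips (e.g.\ in the computation of $\hat h$), so do not assume the stated signs will fall out of ``the paper's conventions'' --- track them explicitly and note where evenness of $\hat g$ is used.
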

\begin{proof}

Denote by $S_{l}g$ the right shift of $g$ by $l$. The short-time
Fourier transform (STFT) \cite{Mallat} of $f$ given $g$, at a shift
$l$ and frequency $\omega$, is defined by 
\[
\mathcal{F}\left[f\cdot S_{l}g\right]\left(\omega\right)=\int_{-\infty}^{\infty}\!\!\!f\left(t\right)g\left(t-l\right)e^{-2\pi i\omega t}dt.
\]

The squared magnitude of the Fourier transform above is called a spectrogram
measurement: 
\[
\left|\mathcal{F}\left[f\cdot S_{l}g\right]\left(\omega\right)\right|^{2}=\left|\int_{-\infty}^{\infty}\!\!\!f\left(t\right)g\left(t-l\right)e^{-2\pi i\omega t}dt\right|^{2}=\left|\left\langle f,h\right\rangle \right|^{2}
\]
where $h\left(t\right)=\overline{g\left(t-l\right)}e^{2\pi i\omega t}$.
We calculate 
\begin{eqnarray*}
\hat{h}\left(k\right) & = & \int_{-\infty}^{\infty}\!\!\!h\left(t\right)e^{-2\pi ikt}dt\\
 & = & \int_{-\infty}^{\infty}\!\!\!\overline{g\left(t-l\right)}e^{2\pi i\omega t}e^{-2\pi ikt}dt\\
 & = & \int_{-\infty}^{\infty}\!\!\!\overline{g\left(\tau\right)}e^{2\pi i\omega\left(\tau+l\right)}e^{-2\pi ik\left(\tau+l\right)}d\tau\\
 & = & e^{2\pi il\left(\omega-k\right)}\int_{-\infty}^{\infty}\!\!\!\overline{g\left(\tau\right)}e^{-2\pi i\left(\omega-k\right)\tau}d\tau.
\end{eqnarray*}
By Plancherel's theorem, we have 
\begin{eqnarray*}
\left|\left\langle f,h\right\rangle \right|^{2} & = & \left|\left\langle \hat{f},\hat{h}\right\rangle \right|^{2} 
  = \left|\int_{-\infty}^{\infty}\!\!\!\hat{f}\left(k\right)\overline{\hat{h}\left(k\right)}dk\right|^{2}\\
 & = & \left|\int_{-\infty}^{\infty}\!\!\!\hat{f}\left(k\right)e^{-2\pi il\left(\omega-k\right)}\overline{\mathcal{F}\left[\overline{g\left(\cdot\right)}\right]\left(\omega-k\right)}dk\right|^{2}\\
 & = & \left|\int_{-\infty}^{\infty}\!\!\!\hat{f}\left(k\right)e^{2\pi ilk}\overline{\mathcal{F}\left[\overline{g\left(\cdot\right)}\right]\left(\omega-k\right)}dk\right|^{2}\\
 & = & \left|\int_{-\infty}^{\infty}\!\!\!\hat{f}\left(\omega-\eta\right)e^{-2\pi il\eta}\overline{\mathcal{F}\left[\overline{g\left(\cdot\right)}\right]\left(\eta\right)}d\eta\right|^{2}\\
 & = & \left|\int_{-\infty}^{\infty}\!\!\!\hat{f}\left(\omega-\eta\right)\hat{g}\left(-\eta\right)e^{-2\pi il\eta}d\eta\right|^{2}
\end{eqnarray*}
where in the last equality we have used 
\[
  \overline{\mathcal{F}\left[\overline{g\left(\cdot\right)}\right]\left(\eta\right)}=\hat{g}\left(-\eta\right).
\]

And so, by Shannon's Sampling theorem \cite{stade}, applied to $\hat{f}$,
we see that $\left|\mathcal{F}\left[f\cdot S_{l}g\right]\left(\omega\right)\right|^{2}$
is equal to
\begin{eqnarray*}
 &  & \left|\int_{-\infty}^{\infty}\!\!\!\hat{f}\left(\omega-\eta\right)\hat{g}\left(-\eta\right)e^{-2\pi il\eta}d\eta\right|^{2}\\
 & = & \left|\int_{-\infty}^{\infty}\!\!\!\hat{g}\left(-\eta\right)\sum_{m\in\mathbb{Z}}\!\!\hat{f}\left(\frac{m}{2}\right)\mbox{sinc}\pi\left(m-2\left(\omega-\eta\right)\right)e^{-2\pi il\eta}d\eta\right|^{2}\\
 & = & \left|\sum_{m\in\mathbb{Z}}\!\!\hat{f}\left(\frac{m}{2}\right)\int_{-\infty}^{\infty}\!\!\!\hat{g}\left(-\eta\right)e^{-2\pi il\eta}\mbox{sinc}\pi\left(m-2\left(\omega-\eta\right)\right)d\eta\right|^{2}\\
 & = & \left|\sum_{m\in\mathbb{Z}}\!\!\hat{f}\left(\frac{m}{2}\right)\left[\hat{g}\left(\cdot\right)e^{-2\pi il\left(\cdot\right)}\star\mbox{sinc}\pi\left(m+2\left(\cdot\right)\right)\right]\left(-\omega\right)\right|^{2}
\end{eqnarray*}
where $\star$ denotes convolution.

Recall that $\mathcal{F}\left[f\star g\right]=\hat{f}\hat{g}$ so
that $f\star g=\mathcal{F}^{-1}\left[\hat{f}\hat{g}\right].$ We calculate
the Fourier transform
\[
\mathcal{F}\left[\hat{g}\left(\cdot\right)e^{-2\pi il\left(\cdot\right)}\right]\left(\xi\right)=\hat{\hat{g}}\left(\xi+l\right)=g\left(-l-\xi\right),
\]
and the Fourier transform $\mathcal{F}\left[\mbox{sinc}\pi\left(m+2\left(\cdot\right)\right)\right]\left(\xi\right)$
as 
\begin{equation}
\mathcal{F}\left[\frac{\sin\pi\left(m+2x\right)}{\pi\left(m+2x\right)}\right]\left(\xi\right)%
=\frac{e^{\pi im\xi}}{2}\chi_{\left(-1,1\right)}\left(\xi\right).
\end{equation}
With this, the spectrogram measurements $\left|\mathcal{F}\left[f\cdot S_{l}g\right]\left(\omega\right)\right|^{2}$
are given by 
\begin{eqnarray*}
 &  & \left|\sum_{m\in\mathbb{Z}}\!\!\hat{f}\left(\frac{m}{2}\right)\!\mathcal{F}^{-1}\left[g\left(-l-\left(\cdot\right)\right)\frac{e^{\pi im\left(\cdot\right)}}{2}\chi_{\left(-1,1\right)}\left(\cdot\right)\right]\!\!\left(-\omega\right)\right|^{2}\\
 & = & \!\!\frac{1}{4}\!\left|\sum_{m\in\mathbb{Z}}\!\!\hat{f}\left(\frac{m}{2}\right)\!\int_{-\infty}^{\infty}\!\!\!g\left(-l-x\right)\!e^{\pi imx}\chi_{\left(-1,1\right)}\!\!\left(x\right)e^{-2\pi ix\omega}\!dx\right|^{2}\\
 & = & \!\!\frac{1}{4}\!\left|\sum_{m\in\mathbb{Z}}\!\!\hat{f}\left(\frac{m}{2}\right)\!\int_{-1}^{1}\!\!\!g\left(-l-x\right)e^{\pi imx}e^{-2\pi ix\omega}dx\right|^{2}\\
 & = & \!\!\frac{1}{4}\!\left|\sum_{m\in\mathbb{Z}}\!\!\hat{f}\left(\frac{m}{2}\right)\!\int_{-l+1}^{-l-1}\!\!g\left(u\right)e^{\pi i\left(-l-u\right)\left(m-2\omega\right)}du\right|^{2}\\
 & = & \!\!\frac{1}{4}\!\left|\sum_{m\in\mathbb{Z}}\!\!\hat{f}\left(\frac{m}{2}\right)e^{-\pi il\left(m-2\omega\right)}\!\int_{-l-1}^{-l+1}\!\!\!g\left(u\right)e^{-2\pi iu\left(\frac{m}{2}-\omega\right)}du\right|^{2}.
\end{eqnarray*}

Since $l$ is such that $\left[-l-1,-l+1\right]\cap\left[-a,a\right]=\left[-a,a\right]$,
we have that $\left|\mathcal{F}\left[f\cdot S_{l}g\right]\left(\omega\right)\right|^{2}$
equals
\begin{eqnarray*}
 &  & \frac{1}{4}\left|\sum_{m\in\mathbb{Z}}\!\!\hat{f}\left(\frac{m}{2}\right)e^{-\pi il\left(m-2\omega\right)}\int_{-a}^{a}\!\!\!g\left(u\right)e^{-2\pi iu\left(\frac{m}{2}-\omega\right)}du\right|^{2}\\
 & = & \frac{1}{4}\left|\sum_{m\in\mathbb{Z}}\!\!\hat{f}\left(\frac{m}{2}\right)e^{-\pi il\left(m-2\omega\right)}\int_{-\infty}^{\infty}\!\!\!g\left(u\right)e^{-2\pi iu\left(\frac{m}{2}-\omega\right)}du\right|^{2}\\
 & = & \frac{1}{4}\left|\sum_{m\in\mathbb{Z}}\!\!e^{-\pi ilm}\hat{f}\left(\frac{m}{2}\right)\hat{g}\left(\frac{m}{2}-\omega\right)\right|^{2}.
\end{eqnarray*}
We have now proven the theorem.
\end{proof}

Using Theorem~\ref{equ:Lifted} we may now write 
\[
\left|\mathcal{F}\left[f\cdot S_{l}g\right]\left(\omega\right)\right|^{2}=\frac{1}{4}\sum_{k\in\mathbb{Z}}\sum_{j\in\mathbb{Z}}A_{k}\overline{A_{j}}
\]
where $A_{n}:=e^{-\pi iln}\hat{f}\left(\frac{n}{2}\right)\hat{g}\left(\frac{n}{2}-\omega\right).$

\subsection{Obtaining a Truncated, Finite Lifted Linear System}

If $\hat{g}$ decays quickly we may truncate the sums above for a
given frequency $\omega$ with minimal error. To that end, we pick
the indices $j$ and $k$ so that $\left|\frac{k}{2}-\omega\right|\leq\delta$
and $\left|\frac{j}{2}-\omega\right|\leq\delta$ for some fixed $\delta\in\mathbb{N}$.
If we denote 
\[
S_{\omega}=\left\{ \left(j,k\right)\in\mathbb{Z\times\mathbb{Z}}|\left|k-2\omega\right|\leq2\delta\text{ and }\left|j-2\omega\right|\leq2\delta\right\} ,
\]
then 
\[
\left|\mathcal{F}\left[f\cdot S_{l}g\right]\left(\omega\right)\right|^{2}=\frac{1}{4}\sum_{(j,k)\in S_{\omega}}A_{k}\overline{A_{j}}+error.
\]
We may write 
\[
\sum_{\left|j-2\omega\right|\leq2\delta}\!\!e^{\pi ilj}\overline{\hat{f}\left(\frac{j}{2}\right)}\overline{\hat{g}\left(\frac{j}{2}-\omega\right)}=e^{2\pi il\omega}\vec{X}_{l}^{*}\vec{Y_{\omega}}
\]
where $\vec{X}_{l}\in\mathbb{C}^{4\delta+1}$ and $\vec{Y}_{\omega}\in\mathbb{C}^{4\delta+1}$
are the vectors 
\[
\vec{X}_{l}=\left(\!\!\! \begin{array}{c}
e^{\pi i l \left(2\delta\right)}\hat{g}\left(-\delta\right)\\
e^{\pi i l \left(2\delta-1\right)}\hat{g}\left(\frac{1}{2}-\delta\right)\\
\vdots\\
e^{\pi i l \cdot 0}\hat{g}\left(0\right)\\
\vdots\\
e^{\pi i l \left(1-2\delta\right)}\hat{g}\left(\delta-\frac{1}{2}\right)\\
e^{\pi i l \left(-2\delta\right)}\hat{g}\left(\delta\right)
\end{array}\!\!\! \right),\,\vec{Y}_{\omega}=\left(\!\!\! \begin{array}{c}
\overline{\hat{f}\left(\omega-\delta\right)}\\
\overline{\hat{f}\left(\omega-\delta+\frac{1}{2}\right)}\\
\vdots\\
\overline{\hat{f}\left(\omega\right)}\\
\vdots\\
\overline{\hat{f}\left(\omega+\delta-\frac{1}{2}\right)}\\
\overline{\hat{f}\left(\omega+\delta\right)}
\end{array}\!\!\! \right).
\]

This notation allows us to write our measurements in a lifted form
\begin{eqnarray*}
\left|\mathcal{F}\left[f\cdot S_{l}g\right]\left(\omega\right)\right|^{2} & \approx & \frac{1}{4}\overline{e^{2\pi il\omega}\vec{X}_{l}^{*}\vec{Y}_{\omega}}\cdot e^{2\pi il\omega}\vec{X}_{l}^{*}\vec{Y}_{\omega}\\
 & = & \frac{1}{4}\vec{X}_{l}^{*}\vec{Y}_{\omega}\vec{Y}_{\omega}^{*}\vec{X}_{l}.
\end{eqnarray*}
Here, $\vec{Y}_{\omega}\vec{Y}_{\omega}^{*}$ is the rank-one matrix
\[ \!
\left[\!\!\!\!\begin{array}{ccccc}
\left|\hat{f}\!\left(\omega-\delta\right)\right|^{2}\!\!\! \!& \!\!\cdots\!\!\! & \overline{\hat{f}\!\left(\omega-\delta\right)}\hat{f}\!\left(\omega\right)\!\!\! & \!\!\cdots\!\!\! & \overline{\hat{f}\!\left(\omega-\delta\right)}\hat{f}\!\left(\omega+\delta\right)\!\!\\
\vdots\!\!\! & \!\!\ddots\!\!\! & \vdots\!\!\! & \!\!\vdots\!\!\! & \vdots\!\!\\
\overline{\hat{f}\!\left(\omega\right)}\hat{f}\!\left(\omega-\delta\right)\!\!\! \! & \!\!\cdots\!\!\! & \left|\hat{f}\!\left(\omega\right)\right|^{2}\!\!\! & \!\!\cdots\!\!\! & \overline{\hat{f}\!\left(\omega\right)}\hat{f}\!\left(\omega+\delta\right)\!\!\\
\vdots\!\!\! & \!\!\vdots\!\!\! & \vdots\!\!\! & \!\!\ddots\!\!\! & \vdots\!\!\\
\overline{\hat{f}\!\left(\omega+\delta\right)}\hat{f}\!\left(\omega-\delta\right)\!\! \!& \!\!\cdots\!\!\! & \overline{\hat{f}\!\left(\omega+\delta\right)}\hat{f}\!\left(\omega\right)\!\!\! & \!\!\cdots\!\!\! & \left|\hat{f}\!\left(\omega+\delta\right)\right|^{2}\!\!
\end{array}\!\!\right]\!\!\!.
\]

For each $\vec{X}_{l}\in\mathbb{C}^{4\delta+1}$, rewrite it as 
\[
\vec{X}_{l}=\left(\begin{array}{ccccc}
m_{-\delta}^{l}, & m_{-\delta+\frac{1}{2}}^{l}, & \dots, & m_{\delta-\frac{1}{2}}^{l}, & m_{\delta}^{l}\end{array}\right)^{T}
\]
so that $m_{k}^{l}=e^{-\pi i l \left(2k\right)}\hat{g}\left(k\right)$.
Then construct the Toeplitz matrix $\mathbf{X}_{l}\in\mathbb{C}^{N\times N}$
as 
\[
\left[\begin{array}{cccccccc}
m_{0}^{l} & m_{\frac{1}{2}}^{l} & \cdots & m_{\delta}^{l} & 0 & 0 & \cdots & 0\\
m_{-\frac{1}{2}}^{l} & m_{0}^{l} & \cdots & m_{\delta-\frac{1}{2}}^{l} & m_{\delta}^{l} & 0 & \cdots & 0\\
\vdots & \vdots & \vdots & \vdots & \vdots & \vdots & \vdots & \vdots\\
0 & 0 & \cdots & 0 & m_{-\delta}^{l} & m_{-\delta+\frac{1}{2}}^{l} & \cdots & m_{\frac{1}{2}}^{l}\\
0 & 0 & \cdots & 0 & 0 & m_{-\delta}^{l} & \cdots & m_{0}^{l}
\end{array}\right]
\]
where $N$ is the number of frequencies $\omega$ being considered.
Then we construct the block matrix ${\bf G}\in\mathbb{C}^{NK\times N}$
as 
\[
\mathbf{G}=\left(\begin{array}{c}
\mathbf{X}_{l_{1}}\\
\mathbf{X}_{l_{2}}\\
\vdots\\
\mathbf{X}_{l_{K}}
\end{array}\right)
\]
where $K$ is the number of shifts of the window $g$.

Let $\mathbf{F}\in\mathbb{C}^{N\times N}$ be defined as 
\[
\mathbf{F}_{i,j}=\begin{cases}
\overline{\hat{f}\left(\frac{i-2n-1}{2}\right)}\hat{f}\left(\frac{j-2n-1}{2}\right), & \text{if }\left|i-j\right|\leq2\delta,\\
0, & \text{otherwise,}
\end{cases}
\]
where $n=\frac{N-1}{4}$. Note that ${\bf F}$ is composed of overlapping
segments of the rank-1 matrices $\vec{Y}_{\omega}\vec{Y}_{\omega}^{*}$
for $\omega\in\{-n,\dots,n\}$.
Thus, our measurements can be written as 
\begin{equation}
\vec{b}\approx\mbox{diag}(\mathbf{GFG^{*}}),\label{eq:measurements}
\end{equation}
where $\vec{b}$ is defined in (\ref{eq:meas_vec}). By consistently
vectorizing (\ref{eq:measurements}), we can obtain a simple linear
system which can be inverted to learn $\vec{F}$, a vectorized version
of ${\bf F}$. In particular, we have 
\begin{equation}
\vec{b}\approx\mathbf{M}\vec{F},\label{eq:lin_system}
\end{equation}
where the matrix $\mathbf{M}\in\mathbb{C}^{NK\times N^{2}}$ can be
computed by, e.g., passing the canonical basis elements for $\mathbb{C}^{N\times N}$,
${\bf E}_{ij}$, through (\ref{eq:measurements}). 

We solve the linear system (\ref{eq:lin_system}) as a least squares
problem; experiments have shown that $\mathbf{M}$ is of rank $NK$. 
The process of recovering the Fourier coefficients of $f$ from $\vec{F}$
is known as angular synchronization, and is described in detail in
\cite{robustPR}.

\section{Numerical Results}
\begin{figure}
\resizebox{3.5in}{2.5in}{
\input{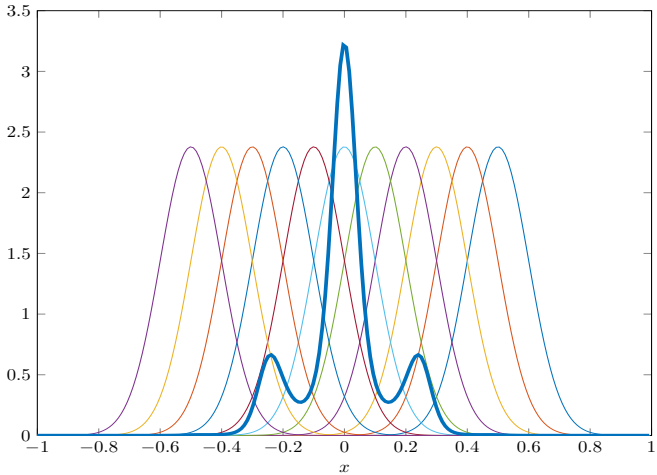}}
\caption{Signal $f$ and 11 shifts of a Gaussian window, $g$. }
\label{fig:functionAndWindows}
\end{figure}

\begin{figure}
\resizebox{3.5in}{2.3in}{
%
\definecolor{mycolor1}{rgb}{0.00000,0.44700,0.74100}%
\definecolor{mycolor2}{rgb}{0.85000,0.32500,0.09800}%
\begin{tikzpicture}

\begin{axis}[%
width=4.521in,
height=3.566in,
at={(0.758in,0.481in)},
scale only axis,
xmin=-1,
xmax=1,
xlabel={$x$},
xmajorgrids,
ymin=-0.2,
ymax=1.2,
ylabel={$f(x)$},
ymajorgrids,
axis background/.style={fill=white},
legend style={legend cell align=left,align=left,draw=white!15!black}
]
\addplot [color=mycolor1,solid,line width=2.0pt]
  table[row sep=crcr]{%
-1	0\\
-0.9755859375	5.06121743032109e-19\\
-0.951171875	4.09467762205061e-18\\
-0.9267578125	3.14177214830437e-17\\
-0.90234375	2.28622993069235e-16\\
-0.8779296875	1.57781244500045e-15\\
-0.853515625	1.03271645914271e-14\\
-0.8291015625	6.4105762557995e-14\\
-0.8046875	3.77401214036781e-13\\
-0.7802734375	2.10717092653665e-12\\
-0.755859375	1.11580033779617e-11\\
-0.7314453125	5.60355271597853e-11\\
-0.70703125	2.6688900695043e-10\\
-0.6826171875	1.20555814037141e-09\\
-0.658203125	5.16459040977684e-09\\
-0.6337890625	2.09833033610546e-08\\
-0.609375	8.08541069660243e-08\\
-0.5849609375	2.95474876896647e-07\\
-0.560546875	1.02406915754675e-06\\
-0.5361328125	3.36610968116037e-06\\
-0.51171875	1.04934304826761e-05\\
-0.4873046875	3.10239410808147e-05\\
-0.462890625	8.69894719443408e-05\\
-0.4384765625	0.000231327184746324\\
-0.4140625	0.000583414072979262\\
-0.3896484375	0.00139545995944246\\
-0.365234375	0.00316554220885597\\
-0.3408203125	0.00681034461031179\\
-0.31640625	0.0138956964390581\\
-0.2919921875	0.0268894425167957\\
-0.267578125	0.0493484514365265\\
-0.2431640625	0.0858925526934506\\
-0.21875	0.141784168759665\\
-0.1943359375	0.221967925503907\\
-0.169921875	0.329566420721337\\
-0.1455078125	0.464072611248535\\
-0.12109375	0.619753760739551\\
-0.0966796875	0.784951193446136\\
-0.072265625	0.942879921912403\\
-0.0478515625	1.07413871939955\\
-0.0234375	1.16052528282121\\
0.0009765625	1.18915671090282\\
0.025390625	1.15561674535778\\
0.0498046875	1.06507161833513\\
0.07421875	0.930966487807927\\
0.0986328125	0.771755146388699\\
0.123046875	0.606757664428955\\
0.1474609375	0.452419440225119\\
0.171875	0.319931863431263\\
0.1962890625	0.214567520899593\\
0.220703125	0.1364773941592\\
0.2451171875	0.0823280275079018\\
0.26953125	0.0471004383841009\\
0.2939453125	0.0255559740639363\\
0.318359375	0.01315073960504\\
0.3427734375	0.00641797718097247\\
0.3671875	0.00297054687000009\\
0.3916015625	0.00130396189399406\\
0.416015625	0.00054285474888351\\
0.4404296875	0.000214334771878567\\
0.46484375	8.0258654575125e-05\\
0.4892578125	2.85023958599493e-05\\
0.513671875	9.59977624289355e-06\\
0.5380859375	3.06641622064437e-06\\
0.5625	9.2894787831241e-07\\
0.5869140625	2.66895862399451e-07\\
0.611328125	7.27248083532313e-08\\
0.6357421875	1.87937558303885e-08\\
0.66015625	4.60611551510044e-09\\
0.6845703125	1.07064704027636e-09\\
0.708984375	2.36019598167956e-10\\
0.7333984375	4.93446488623637e-11\\
0.7578125	9.78413214104483e-12\\
0.7822265625	1.83990248724577e-12\\
0.806640625	3.28138776077242e-13\\
0.8310546875	5.55022451893504e-14\\
0.85546875	8.9033572474687e-15\\
0.8798828125	1.35452585229465e-15\\
0.904296875	1.95438916936272e-16\\
0.9287109375	2.67439188956253e-17\\
0.953125	3.47079766178474e-18\\
0.9775390625	4.27192673961081e-19\\
};
\addlegendentry{True};

\addplot [color=mycolor2,line width=2.0pt,only marks,mark=x,mark options={solid}]
  table[row sep=crcr]{%
-1	-0.000202841191549563\\
-0.9755859375	4.46244013285021e-05\\
-0.951171875	0.000264460889814586\\
-0.9267578125	-4.07829540980006e-05\\
-0.90234375	-0.000352063339999471\\
-0.8779296875	-0.000147934115423612\\
-0.853515625	0.000260659613719319\\
-0.8291015625	0.000405368743992682\\
-0.8046875	0.000123230592145751\\
-0.7802734375	-0.000478404874050352\\
-0.755859375	-0.000680713003406562\\
-0.7314453125	0.000186272871328933\\
-0.70703125	0.00111732219922958\\
-0.6826171875	0.000427409278184326\\
-0.658203125	-0.00113974118288414\\
-0.6337890625	-0.00106685957570456\\
-0.609375	0.000689092018012468\\
-0.5849609375	0.00137184128149167\\
-0.560546875	-2.34147671040072e-05\\
-0.5361328125	-0.00121083274237387\\
-0.51171875	-0.000500741291174009\\
-0.4873046875	0.000757859894921445\\
-0.462890625	0.000778200156433541\\
-0.4384765625	5.25895076111803e-05\\
-0.4140625	0.000115764340711959\\
-0.3896484375	0.00127488063343896\\
-0.365234375	0.00319115529508644\\
-0.3408203125	0.00679026000977555\\
-0.31640625	0.01413682081632\\
-0.2919921875	0.0273208302431449\\
-0.267578125	0.0492955270772962\\
-0.2431640625	0.0852702293126383\\
-0.21875	0.141434637007617\\
-0.1943359375	0.22233681842713\\
-0.169921875	0.330074889751032\\
-0.1455078125	0.464160801523749\\
-0.12109375	0.619576558211796\\
-0.0966796875	0.784785736864478\\
-0.072265625	0.942712453923739\\
-0.0478515625	1.07403021570482\\
-0.0234375	1.16067422420922\\
0.0009765625	1.1894649502137\\
0.025390625	1.1557427893738\\
0.0498046875	1.06495028439369\\
0.07421875	0.930800563298959\\
0.0986328125	0.771586769984151\\
0.123046875	0.606587827330883\\
0.1474609375	0.452543443952691\\
0.171875	0.320458484535641\\
0.1962890625	0.214890639670432\\
0.220703125	0.136075044845245\\
0.2451171875	0.0817264172770004\\
0.26953125	0.0471050316606528\\
0.2939453125	0.0259948171272994\\
0.318359375	0.0133621901616764\\
0.3427734375	0.00639329931942127\\
0.3671875	0.00299914921257817\\
0.3916015625	0.00115431128528479\\
0.416015625	6.46580723738413e-05\\
0.4404296875	9.74187818202472e-05\\
0.46484375	0.000824994246905456\\
0.4892578125	0.000691206803751984\\
0.513671875	-0.000609576459322004\\
0.5380859375	-0.0011835443450301\\
0.5625	0.000120334792290388\\
0.5869140625	0.00140646867125783\\
0.611328125	0.000549618671537344\\
0.6357421875	-0.00116052781734495\\
0.66015625	-0.00104658564174936\\
0.6845703125	0.000545830725635263\\
0.708984375	0.00108894564149215\\
0.7333984375	8.87743255489764e-05\\
0.7578125	-0.000702559543958699\\
0.7822265625	-0.000431290161106109\\
0.806640625	0.000161936523095954\\
0.8310546875	0.000408646176162687\\
0.85546875	0.000233628393027204\\
0.8798828125	-0.0001796084438811\\
0.904296875	-0.000344743664722484\\
0.9287109375	-4.86886245548711e-06\\
0.953125	0.000266260962035537\\
0.9775390625	1.40845556830821e-05\\
};
\addlegendentry{Approx.};

\end{axis}
\end{tikzpicture}
\caption{True signal $f$ and its reconstruction for the first experiment.}
\label{fig:recon_ex1}
\end{figure}
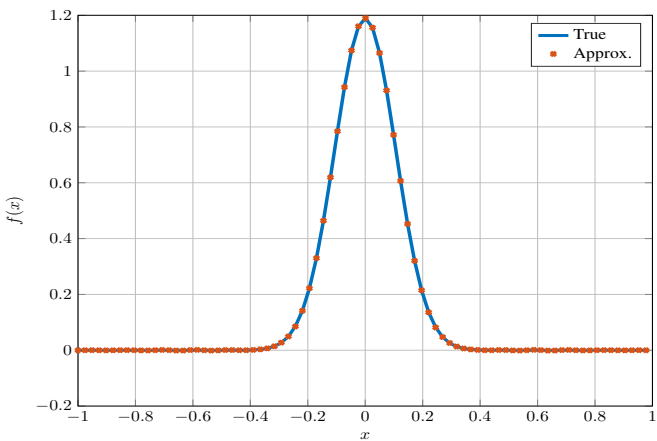

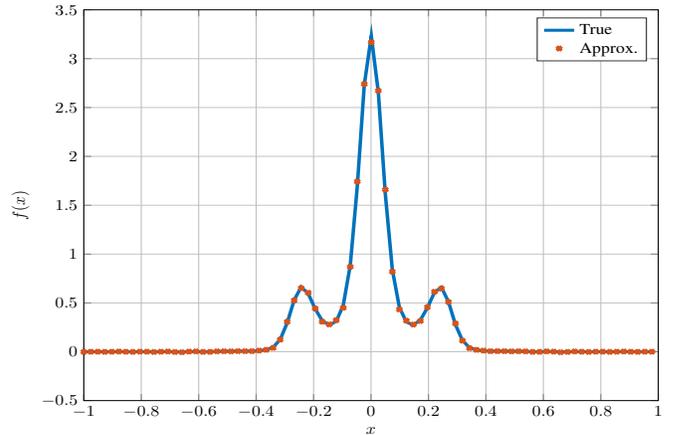
\begin{figure}
\resizebox{3.5in}{2.3in}{
%
\definecolor{mycolor1}{rgb}{0.00000,0.44700,0.74100}%
\definecolor{mycolor2}{rgb}{0.85000,0.32500,0.09800}%
\begin{tikzpicture}

\begin{axis}[%
width=4.521in,
height=3.566in,
at={(0.758in,0.481in)},
scale only axis,
xmin=-1,
xmax=1,
xlabel={$x$},
xmajorgrids,
ymin=-0.5,
ymax=3.5,
ylabel={$f(x)$},
ymajorgrids,
axis background/.style={fill=white},
legend style={legend cell align=left,align=left,draw=white!15!black}
]
\addplot [color=mycolor1,solid,line width=2.0pt]
  table[row sep=crcr]{%
-1	0\\
-0.9755859375	4.19478446377278e-11\\
-0.951171875	8.11503463996002e-11\\
-0.9267578125	1.90498282421332e-10\\
-0.90234375	5.99548609948456e-10\\
-0.8779296875	2.50947298350043e-09\\
-0.853515625	1.24740444452351e-08\\
-0.8291015625	6.14737910004603e-08\\
-0.8046875	2.48959952812876e-07\\
-0.7802734375	7.25975409982051e-07\\
-0.755859375	1.47540513257043e-06\\
-0.7314453125	2.25897890710148e-06\\
-0.70703125	3.06498731677928e-06\\
-0.6826171875	4.46820896961558e-06\\
-0.658203125	8.20263867156594e-06\\
-0.6337890625	2.03739622101898e-05\\
-0.609375	6.5847760484666e-05\\
-0.5849609375	0.000241493061550909\\
-0.560546875	0.000831889122340847\\
-0.5361328125	0.00225245807328559\\
-0.51171875	0.00430384467181926\\
-0.4873046875	0.00579439772800158\\
-0.462890625	0.00610716019089608\\
-0.4384765625	0.00601459058703846\\
-0.4140625	0.00669042635684321\\
-0.3896484375	0.00965928810695736\\
-0.365234375	0.0188765404742947\\
-0.3408203125	0.0466038566306467\\
-0.31640625	0.124242398066006\\
-0.2919921875	0.295002928731489\\
-0.267578125	0.529527302541227\\
-0.2431640625	0.662893946276836\\
-0.21875	0.596146602198795\\
-0.1943359375	0.438587712912628\\
-0.169921875	0.318234484110942\\
-0.1455078125	0.273075945390064\\
-0.12109375	0.31108577875311\\
-0.0966796875	0.475726271026744\\
-0.072265625	0.886198264360338\\
-0.0478515625	1.69156084980378\\
-0.0234375	2.73301519643549\\
0.0009765625	3.23163493519465\\
0.025390625	2.65671400578322\\
0.0498046875	1.61230937863694\\
0.07421875	0.840247694871496\\
0.0986328125	0.455639891712851\\
0.123046875	0.304364648915331\\
0.1474609375	0.273826341958246\\
0.171875	0.325188770588914\\
0.1962890625	0.450898046713292\\
0.220703125	0.60704342529059\\
0.2451171875	0.659789945284359\\
0.26953125	0.511861285703412\\
0.2939453125	0.277644268188253\\
0.318359375	0.115066786637374\\
0.3427734375	0.0431242141278741\\
0.3671875	0.0177101441446335\\
0.3916015625	0.00927421775626059\\
0.416015625	0.00657894624156717\\
0.4404296875	0.00600910697676746\\
0.46484375	0.00611259772103381\\
0.4892578125	0.00572071606574145\\
0.513671875	0.00414250532668777\\
0.5380859375	0.00210509037653878\\
0.5625	0.000758665987991748\\
0.5869140625	0.000217582121392201\\
0.611328125	5.95339801913937e-05\\
0.6357421875	1.87381741534259e-05\\
0.66015625	7.72912680481412e-06\\
0.6845703125	4.30600029607607e-06\\
0.708984375	2.99066023016283e-06\\
0.7333984375	2.19854027730265e-06\\
0.7578125	1.41100492395684e-06\\
0.7822265625	6.7551519833927e-07\\
0.806640625	2.25042074876349e-07\\
0.8310546875	5.43775348057535e-08\\
0.85546875	1.09470378963463e-08\\
0.8798828125	2.21920089789223e-09\\
0.904296875	5.4084367124758e-10\\
0.9287109375	1.76137871021635e-10\\
0.953125	7.65818329685451e-11\\
0.9775390625	3.99273477889542e-11\\
};
\addlegendentry{True};

\addplot [color=mycolor2,line width=2.0pt,only marks,mark=x,mark options={solid}]
  table[row sep=crcr]{%
-1	-0.00128225602157306\\
-0.9755859375	0.000289235931413599\\
-0.951171875	0.000680445687625425\\
-0.9267578125	-0.00126078072238005\\
-0.90234375	0.000363666027880057\\
-0.8779296875	0.00269424891596502\\
-0.853515625	-4.25552288509502e-06\\
-0.8291015625	-0.001612923998171\\
-0.8046875	0.00116519947405553\\
-0.7802734375	0.000485887807705123\\
-0.755859375	-0.00235300809904028\\
-0.7314453125	0.000757715514072846\\
-0.70703125	0.00304716094271794\\
-0.6826171875	-0.00260712376425337\\
-0.658203125	-0.00475487325637731\\
-0.6337890625	0.00188148832341266\\
-0.609375	0.00372291613129194\\
-0.5849609375	-0.00163720112637976\\
-0.560546875	-0.00119937590401694\\
-0.5361328125	0.0043162780479772\\
-0.51171875	0.00499350611977\\
-0.4873046875	0.00403521696846639\\
-0.462890625	0.00601501076672093\\
-0.4384765625	0.00600000396169315\\
-0.4140625	0.00605950597267569\\
-0.3896484375	0.0124079849342762\\
-0.365234375	0.0199579690033015\\
-0.3408203125	0.0398121731112737\\
-0.31640625	0.123950781063476\\
-0.2919921875	0.30655451916598\\
-0.267578125	0.525692952712807\\
-0.2431640625	0.650938464893599\\
-0.21875	0.604190309965675\\
-0.1943359375	0.44260688318313\\
-0.169921875	0.307870957243638\\
-0.1455078125	0.280166246991719\\
-0.12109375	0.323342436399337\\
-0.0966796875	0.450442321535474\\
-0.072265625	0.870575402003276\\
-0.0478515625	1.74169068594273\\
-0.0234375	2.74032689050706\\
0.0009765625	3.16818335301826\\
0.025390625	2.67221401159123\\
0.0498046875	1.65925228246198\\
0.07421875	0.82024235739731\\
0.0986328125	0.433192882404205\\
0.123046875	0.318202649086878\\
0.1474609375	0.278936017233902\\
0.171875	0.314892854218355\\
0.1962890625	0.45643599230653\\
0.220703125	0.613776223568264\\
0.2451171875	0.647001232790648\\
0.26953125	0.509869043787254\\
0.2939453125	0.289093146433415\\
0.318359375	0.113557661616879\\
0.3427734375	0.0366985233820079\\
0.3671875	0.019332062852443\\
0.3916015625	0.0117577423069555\\
0.416015625	0.00583660093142806\\
0.4404296875	0.00611348858192297\\
0.46484375	0.00585961250025348\\
0.4892578125	0.0039831364871015\\
0.513671875	0.00512202031869868\\
0.5380859375	0.00398148682387491\\
0.5625	-0.00156791348643252\\
0.5869140625	-0.00127305256121638\\
0.611328125	0.00397538199114276\\
0.6357421875	0.00132572412561162\\
0.66015625	-0.00499948104954694\\
0.6845703125	-0.00206805617706797\\
0.708984375	0.00318775699021314\\
0.7333984375	0.000353362546378057\\
0.7578125	-0.00230257234616947\\
0.7822265625	0.000747526105459097\\
0.806640625	0.0009663114840813\\
0.8310546875	-0.00170425955539033\\
0.85546875	0.000304188975390063\\
0.8798828125	0.00268693412512695\\
0.904296875	9.11555018466944e-05\\
0.9287109375	-0.00117898682020554\\
0.953125	0.000798514226067969\\
0.9775390625	0.00011796051635268\\
};
\addlegendentry{Approx.};

\end{axis}
\end{tikzpicture}
\caption{True signal $f$ and its reconstruction for the second experiment.}
\label{fig:recon_ex2}
\end{figure}

We test the Phase Retrieval algorithm above for two different choices
of signal $f$. The first is a Gaussian signal $f\left(x\right)=2^{\frac{1}{4}}e^{-25\left(\frac{4x}{3}\right)^{2}}\chi_{\left[-1,1\right]}$,
and the second is a modified Gaussian $f\left(x\right)=2^{\frac{1}{4}}e^{-8\pi x^{2}}\cos\left(24x\right)\chi_{\left[-1,1\right]}$.
In both cases, the window used is the Gaussian $g\left(x\right)=c\cdot2^{\frac{1}{4}}e^{-16\pi x^{2}}\chi_{\left[-\frac{1}{2},\frac{1}{2}\right]}$
where $c$ is a constant chosen so that $\left\Vert g\right\Vert _{L^{2}}=1$.

We use a total of 11 shifts of $g$ in each experiment. Since $g$
is supported on $\left[-\frac{1}{2},\frac{1}{2}\right]$, any two
consecutive shifts are separated by $\frac{0.5}{11}$ (see Figure
~\ref{fig:functionAndWindows}). We choose 61 values of $\omega$
from $\left[-15,15\right]$ sampled in half-steps, and set $\delta=7$.

The reconstructions in physical space are shown at selected grid points
in Figures ~\ref{fig:recon_ex1} and ~\ref{fig:recon_ex2}. The
relative $\ell^{2}$ error in physical space is $1.47\times10^{-3}$
for the first experiment and $1.872\times10^{-2}$ for the second.

\section{Future Work}
While this paper addresses the 1D problem, the extension of this method to the 2D setting is an appealing avenue for future research. Indeed, preliminary results indicate that the underlying discrete method that forms the basis for this paper extends to two dimensions without too much difficulty. 
Furthermore, empirical results suggest that the method proposed here demonstrates robustness to noise, although 
we defer a detailed analysis (and derivation of an associated robust recovery guarantee) to future work.
\section*{Acknowledgement}
This work was supported in part by the National Science Foundation grant 
NSF DMS-1416752.

\bibliographystyle{abbrv}
\bibliography{continuousPRone_IEEE}

\end{document}